\newtheorem{theorem}{Theorem}[section]
\newtheorem{lemma}[theorem]{Lemma}
\newtheorem{corollary}[theorem]{Corollary}
\theoremstyle{definition}
\newtheorem*{question}{Question}
\numberwithin{equation}{section}
\mathchardef\hyphen="2D
\begin{document}
\allowdisplaybreaks
\title{Embedding $C^{*}$-algebras into the Calkin algebra of $\ell^{p}$}
\author{March T.~Boedihardjo}
\address{Department of Mathematics, Michigan State University, Michigan, MI 48824}
\email{boedihar@msu.edu}
\keywords{Calkin algebra, $\ell^{p}$ space, Brown-Douglas-Fillmore}
\subjclass[2020]{46H15, 47A53}
\begin{abstract}
Let $p\in(1,\infty)$. We show that there is an isomorphism from any separable unital subalgebra of $B(\ell^{2})/K(\ell^{2})$ onto a subalgebra of $B(\ell^{p})/K(\ell^{p})$ that preserves the Fredholm index. As a consequence, every separable $C^{*}$-algebra is isomorphic to a subalgebra of $B(\ell^{p})/K(\ell^{p})$. Another consequence is the existence of operators on $\ell^{p}$ that behave like the essentially normal operators with arbitrary Fredholm indices in the Brown-Douglas-Fillmore theory.
\end{abstract}
\maketitle
\section{Introduction}
Let $p\in(1,\infty)\backslash\{2\}$. While the structure of operators on $\ell^{2}$ has been extensively studied in the literature, the structure of operators on $\ell^{p}$ remains mysterious. Perhaps outside a small class of trivial exceptions, there is no resemblance of operators on $\ell^{p}$ to operators on $\ell^{2}$. Or perhaps there is a large class of operators on $\ell^{p}$ that behave like operators on $\ell^{2}$. The literature so far suggests that the former is true. Indeed, the only $C^{*}$-algebras that are isometrically isomorphic to a subalgebra of $B(\ell^{p})$ are the commutative $C^{*}$-algebras \cite{garthi}, where $B(\ell^{p})$ is the algebra of bounded linear operators on $\ell^{p}$. Even if one relaxes isometric isomorphism to isomorphism (i.e., allowing some norm distortion up to a constant), only the residually finite dimensional $C^{*}$-algebras can be embedded into $B(\ell^{p})$ \cite{isorep}. As for single operators, perhaps, the simplest operator on $\ell^{p}$ beyond the diagonal operators is the bilateral shift. Yet, the norm of polynomials of the bilateral shift on $\ell^{p}$ already behaves quite differently from that of the bilateral shift on $\ell^{2}$ which is a normal operator. Indeed, there exist Laurent polynomials $f_{1},f_{2},\ldots$ such that $\sup_{w\in\mathbb{C},\,|w|=1}|f_{n}(w)|=1$, for all $n\in\mathbb{N}$, and $\|f_{n}(B)\|\to\infty$ as $n\to\infty$, where $B$ is the bilateral shift on $\ell^{p}$ \cite{fixman}.

In this paper, we show that the situation is quite different in the context of Calkin algebras. More precisely, we prove that every separable unital subalgebra of $B(\ell^{2})/K(\ell^{2})$ can embedded into a subalgebra of $B(\ell^{p})/K(\ell^{p})$ for which the Fredholm index is preserved, where $K(\ell^{p})$ is the ideal of compact operators on $\ell^{p}$. Thus, every separable $C^{*}$-algebra is isomorphic to a subalgebra of $B(\ell^{p})/K(\ell^{p})$.

In the context of single operator theory, while the theory of spectral operators developed by Dunford, Schwartz and others \cite{ds} provides a class of operators on Banach spaces that behave like the normal operators on Hilbert spaces, it is not even known if there is a class of operators on $\ell^{p}$ that behave like the essentially normal operators with arbitrary Fredholm indices in the Brown-Douglas-Fillmore theory. For example, the unilateral shift on $\ell^{2}$ is an essentially normal operator (with nontrivial Fredholm index) and thus has a functional calculus in the Calkin algebra, but it is not even obvious whether such an operator on $\ell^{p}$ exists.
\begin{question}
Fix $p\in(1,\infty)\backslash\{2\}$. Does there exist a constant $C_{p}\geq 1$ and a Fredholm operator $T$ on $\ell^{p}$ with nontrivial Fredholm index such that
\begin{equation}\label{11}
\|f(\pi(T))\|\leq C_{p}\sup_{w\in\mathbb{C},\,|w|=1}|f(w)|,
\end{equation}
for all Laurent polynomials $f$, where $\pi:B(\ell^{p})\to B(\ell^{p})/K(\ell^{p})$ is the quotient map?
\end{question}
In other words, is there an operator on $\ell^{p}$ that behaves like the unilateral shift on $\ell^{2}$ in terms of the Fredholm index and functional calculus in the Calkin algebra? It turns out that while the obvious candidate, namely, the unilateral shift on $\ell^{p}$ does not satisfy the inequality (\ref{11}) (see \cite{wang}), such an operator on $\ell^{p}$ does exist. In fact, our main result provides a class of operators on $\ell^{p}$ that behave like the essentially normal operators with arbitrary Fredholm indices in the Brown-Douglas-Fillmore theory.

A homomorphism $\phi$ from a Banach algebra $\mathcal{A}$ into another Banach algebra is a bounded linear map for which $\phi(ab)=\phi(a)\phi(b)$ for all $a,b\in\mathcal{A}$. A homomorphism $\phi$ is {\it isomorphic} if there exists $C\geq 1$ such that
\[\frac{1}{C}\|a\|\leq\|\phi(a)\|\leq C\|a\|,\]
for all $a\in\mathcal{A}$.

The following theorem is the main result of this paper.
\begin{theorem}\label{mainthm}
Let $p\in(1,\infty)$. Let $\mathcal{A}$ be a separable unital subalgebra of $B(\ell^{2})/K(\ell^{2})$. Then there exists a unital homomorphism $\phi:\mathcal{A}\to B(\ell^{p})/K(\ell^{p})$ such that
\[\frac{1}{C_{p}}\|a\|\leq\|\phi(a)\|\leq C_{p}\|a\|,\]
for all $a\in\mathcal{A}$, where $C_{p}\geq 1$ is a constant that depends only on $p$. Moreover, $\phi(a)$ and $a$ have the same Fredholm index for every $a\in\mathcal{A}$ that is invertible in $B(\ell^{2})/K(\ell^{2})$.
\end{theorem}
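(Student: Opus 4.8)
The plan is to dispose of $p=2$ by taking $\phi$ to be the inclusion, and for $p\in(1,\infty)\setminus\{2\}$ to build $\phi$ by hand. First I would reduce: enlarging $\mathcal A$ to a separable unital $C^{*}$-subalgebra of $B(\ell^{2})/K(\ell^{2})$ with the same unit costs nothing, since an isomorphic unital homomorphism of the larger algebra restricts to one of $\mathcal A$; so I may assume $\mathcal A=\pi_{2}(\mathcal B)$ for a separable $C^{*}$-subalgebra $\mathcal B\subseteq B(\ell^{2})$ with a fixed countable generating set $\{a_{1},a_{2},\dots\}$, $a_{0}=I$, where $\pi_{2}\colon B(\ell^{2})\to B(\ell^{2})/K(\ell^{2})$ is the quotient map. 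Next I would choose the orthonormal basis of $\ell^{2}$ \emph{adapted} to these generators: building $\ell^{2}=\bigoplus_{k\ge1}H_{k}$ inductively so that $a_{i}H_{\le k}\subseteq H_{\le k+1}$ and $a_{i}^{*}H_{\le k}\subseteq H_{\le k+1}$ for all $i\le k$ (with $H_{\le k}=H_{1}\oplus\cdots\oplus H_{k}$), while forcing $\dim H_{k}\to\infty$ and exhausting $\ell^{2}$. With respect to this decomposition every generator is block tridiagonal modulo a finite rank operator; I would discard those finite rank parts (irrelevant modulo compacts) and work with the genuinely block-tridiagonal operators $\widetilde a_{i}$.

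The $\ell^{p}$ side is built from uniformly Euclidean finite dimensional subspaces of $\ell^{p}$. Using that the linear span of finitely many independent Gaussians in $L^{p}$ is (after normalisation) isometric to $\ell^{2}_{n}$ and is complemented by the first Wiener chaos projection, which is bounded on $L^{p}$ with a norm $c_{p}$ independent of the number of variables, and then discretising $L^{p}$, I would produce on disjoint coordinate blocks of $\ell^{p}$ subspaces $V_{k}\subseteq\ell^{p}_{N_{k}}$ that are $(1+2^{-k})$-isomorphic to $\ell^{2}_{\dim H_{k}}$ and $2c_{p}$-complemented in $\ell^{p}_{N_{k}}$, with $\bigoplus_{k}^{\ell^{p}}\ell^{p}_{N_{k}}=\ell^{p}$. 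Then $E:=\bigoplus_{k}^{\ell^{p}}V_{k}$ is complemented in $\ell^{p}$ by a projection of norm at most $2c_{p}$, and $E\cong\ell^{p}$ by Pe\l czy\'nski. Fixing isomorphisms $u_{k}\colon H_{k}\to V_{k}$ with $\|u_{k}\|\,\|u_{k}^{-1}\|\le1+2^{-k}$, I would transfer $\widetilde a_{i}$ to the operator $Y_{i}\in B(\ell^{p})$ acting on $E$ by the block-tridiagonal matrix $\bigl[u_{j}(P_{H_{j}}\widetilde a_{i}P_{H_{k}})u_{k}^{-1}\bigr]_{|j-k|\le1}$ and vanishing on a fixed complement $F\cong\ell^{p}$ of $E$, and declare $\phi(\pi_{2}(\widetilde a_{i}))=\pi_{p}(Y_{i})$. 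The block-matrix identity $\sum_{l}P_{H_{j}}\widetilde a_{i}P_{H_{l}}\widetilde a_{i'}P_{H_{k}}=P_{H_{j}}\widetilde a_{i}\widetilde a_{i'}P_{H_{k}}$ shows that this transfer respects products exactly, so $\phi$ is a homomorphism on the dense subalgebra generated by the $\pi_{2}(\widetilde a_{i})$.

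For the norm estimates I would use that each ``$l$-th block stripe'' $T\mapsto\sum_{k}P_{V_{k+l}}TP_{V_{k}}$ is contractive on every $\ell^{p}$-direct sum of the $V_{k}$ (average over the isometric gauge $\sum_{k}e^{ikt}P_{V_{k}}$), which together with restriction to single blocks and the $(1+2^{-k})$ distortions gives $C_{p}^{-1}\|a\|\le\|\phi(a)\|\le C_{p}\|a\|$ for $a$ a polynomial in the generators, with $C_{p}$ depending only on $p$. The main obstacle I expect is precisely the extension to the norm closure $\mathcal A$: a degree-$m$ word in the $\widetilde a_{i}$ is only block-$m$-banded, and the crude stripe bound degrades like $m^{|1/p-1/2|}$ since a column of a block-$m$-banded operator spreads over $m$ consecutive blocks and $\ell^{p}$ contains no copy of $\ell^{2}$; making the estimate uniform in $m$ is the real content, and I expect it to force extra care in the adapted-decomposition step — e.g.\ arranging, after a Voiculescu-type perturbation, that the off-diagonal blocks of the generators decay fast enough that the far band components of words are negligible modulo compacts. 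Unitality is then restored by absorbing onto $F\cong\ell^{p}$ a further bounded unital homomorphism of $\mathcal A$ supplied by the same machinery, so that $\phi(I)=\pi_{p}(P_{E})+\pi_{p}(P_{F})=1$, at the cost of only an absolute factor in $C_{p}$. Finally, the Fredholm index is preserved because, for $\widetilde a_{i}$ Fredholm, $Y_{i}|_{E}$ carries block by block the same kernel and cokernel data as $\widetilde a_{i}$ up to the $(1+2^{-k})$ distortions and a finite perturbation, so $\operatorname{ind}\phi(a)=\operatorname{ind}a$.
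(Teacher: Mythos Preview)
Your proposal assembles the right ingredients (block tridiagonalisation, transfer to an $\ell^{p}$-direct sum of finite-dimensional Hilbert spaces, Pe{\l}czy\'nski to identify that space with $\ell^{p}$), but the gap you yourself flag is genuine and your suggested fix does not close it. You tridiagonalise only the \emph{generators} $a_{i}$ and then try to control words of length $m$; these are $m$-banded and the comparison constant between the $\ell^{2}$- and $\ell^{p}$-direct-sum norms deteriorates with $m$, so the homomorphism you define on the dense polynomial subalgebra need not extend boundedly to $\mathcal{A}$. The paper sidesteps this by a small but decisive change of viewpoint: it applies the tridiagonalisation lemma (Lemma~\ref{blocktridiag}) to the \emph{entire} separable algebra $\mathcal{B}\supseteq\pi_{2}^{-1}(\mathcal{A})$, so that every element of $\mathcal{B}$---not just the generators, and in particular every product---is a compact perturbation of something in $\mathcal{W}_{1}$. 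One then defines $\Phi$ directly on all of $\pi_{2}(\mathcal{B})$ by lifting each $a$ to some $T\in\mathcal{W}_{1}$ and setting $\Phi(a)=\pi(\Psi(T))$; well-definedness and the uniform constant $3$ come from the essential-norm bound on $\mathcal{W}_{1}$, while multiplicativity uses that $\Psi$ is an \emph{exact} homomorphism on all of $\bigcup_{m}\mathcal{W}_{m}$ (so $\Psi(T_{1})\Psi(T_{2})=\Psi(T_{1}T_{2})$ with $T_{1}T_{2}\in\mathcal{W}_{2}$) together with the essential-norm bound on $\mathcal{W}_{2}$. No extension-by-density and no Voiculescu perturbation are needed; your phrase ``far band components of words are negligible modulo compacts'' is precisely the assertion that the tridiagonal truncation of $T$ differs from $T$ by a compact for every $T\in\mathcal{B}$, and the point is simply to apply it to $\mathcal{B}$ itself rather than attempt to deduce it for products from information about generators.

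Two further issues. Your unitality repair via a complement $F$ is unnecessary once $\Phi$ is defined globally: $I\in\mathcal{W}_{1}$ gives $\Phi(1)=1$, and the identification $X_{p}\cong\ell^{p}$ is a single Pe{\l}czy\'nski step. More seriously, your Fredholm-index argument (``$Y_{i}|_{E}$ carries block by block the same kernel and cokernel data'') only treats the generators $\widetilde a_{i}$ and does not address a general invertible $a\in\mathcal{A}$, which need not be one of them nor even a polynomial in them. The paper instead enlarges $\mathcal{B}$ before tridiagonalising so as to contain the shift $U$, its adjoint, and preimages of continuous paths in the invertibles from each $a$ (in a countable dense set) to the appropriate power of $\pi_{2}(U)$; it checks by hand that $\Psi(U^{*})$ has index $1$, and then invokes local constancy of the index along the image path $\Phi\circ f_{a}$.
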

\begin{corollary}
Let $p\in(1,\infty)$. For every separable $C^{*}$-algebra $\mathcal{A}$, there is an isomorphic homomorphism $\phi:\mathcal{A}\to B(\ell^{p})/K(\ell^{p})$.
\end{corollary}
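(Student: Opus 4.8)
The plan is to deduce the corollary from Theorem~\ref{mainthm} together with the classical fact that every separable $C^{*}$-algebra embeds, as a unital subalgebra, into the Calkin algebra $\mathcal{Q}:=B(\ell^{2})/K(\ell^{2})$. First I would reduce to the unital case: if $\mathcal{A}$ is nonunital, replace it by its unitization $\widetilde{\mathcal{A}}=\mathcal{A}\oplus\mathbb{C}1$, which is again a separable $C^{*}$-algebra, and observe that an isomorphic homomorphism defined on $\widetilde{\mathcal{A}}$ restricts to one on $\mathcal{A}$. So from now on $\mathcal{A}$ may be assumed unital.

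Next I would realize $\mathcal{A}$ inside $\mathcal{Q}$. Pick a faithful unital $*$-representation $\pi\colon\mathcal{A}\to B(H)$ on a separable Hilbert space $H$ (for instance a countable direct sum of GNS representations associated to states separating a countable dense subset of $\mathcal{A}$). Form the infinite ampliation $\pi^{(\infty)}\colon\mathcal{A}\to B(H^{(\infty)})\cong B(\ell^{2})$ and let $\sigma:=q\circ\pi^{(\infty)}$, where $q\colon B(\ell^{2})\to\mathcal{Q}$ is the quotient map. Then $\sigma$ is a unital $*$-homomorphism, and it is injective: since $\pi^{(\infty)}(a)=\pi(a)\oplus\pi(a)\oplus\cdots$, this operator is compact only when $\pi(a)=0$, i.e.\ only when $a=0$. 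An injective $*$-homomorphism between $C^{*}$-algebras is isometric, so $\sigma$ is isometric, and $\sigma(\mathcal{A})$ is a separable unital $C^{*}$-subalgebra of $\mathcal{Q}$ (it contains $q(I)=1_{\mathcal{Q}}$), hence in particular a separable unital subalgebra in the sense required by Theorem~\ref{mainthm}.

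Finally I would apply Theorem~\ref{mainthm} to $\sigma(\mathcal{A})\subseteq\mathcal{Q}$ to obtain a unital homomorphism $\psi\colon\sigma(\mathcal{A})\to B(\ell^{p})/K(\ell^{p})$ with $C_{p}^{-1}\|x\|\le\|\psi(x)\|\le C_{p}\|x\|$ for all $x$, and set $\phi:=\psi\circ\sigma$. Because $\sigma$ is isometric, $\phi$ is a homomorphism satisfying $C_{p}^{-1}\|a\|\le\|\phi(a)\|\le C_{p}\|a\|$ for all $a\in\mathcal{A}$, i.e.\ an isomorphic homomorphism; restricting to $\mathcal{A}$ inside $\widetilde{\mathcal{A}}$ handles the nonunital case. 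I do not expect any real obstacle here beyond Theorem~\ref{mainthm} itself: the only genuine inputs are the standard embedding of a separable $C^{*}$-algebra into the Calkin algebra and the elementary remark that ampliation annihilates no nonzero element modulo the compacts; the only point requiring a little care is the unital reduction, namely that adjoining a unit preserves separability and that the resulting embedding is \emph{unital}, so that the hypotheses of Theorem~\ref{mainthm} are met (the Fredholm-index part of that theorem is not needed for this corollary).
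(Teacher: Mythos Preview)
Your proposal is correct. The paper itself states this corollary without proof, treating it as an immediate consequence of Theorem~\ref{mainthm}; the implicit argument is exactly the one you spell out, namely that every separable $C^{*}$-algebra embeds (isometrically, via a faithful representation followed by infinite ampliation and passage to the Calkin algebra) as a separable unital subalgebra of $B(\ell^{2})/K(\ell^{2})$, after which Theorem~\ref{mainthm} applies. Your handling of the unital reduction and the observation that the Fredholm-index clause of Theorem~\ref{mainthm} is not needed here are also appropriate.
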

\begin{corollary}
Let $p\in(1,\infty)$. There exists a Fredholm operator $T$ on $\ell^{p}$ with Fredholm index $1$ such that
\[\|f(\pi(T))\|\leq C_{p}\sup_{w\in\mathbb{C},\,|w|=1}|f(w)|,\]
for all Laurent polynomials $f$, where $C_{p}\geq 1$ is a constant that depends only on $p$ and $\pi:B(\ell^{p})\to B(\ell^{p})/K(\ell^{p})$ is the quotient map
\end{corollary}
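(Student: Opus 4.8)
The plan is to apply Theorem~\ref{mainthm} to the $C^{*}$-algebra generated in $B(\ell^{2})/K(\ell^{2})$ by the image of the backward shift. Let $S\in B(\ell^{2})$ be the backward shift, $Se_{0}=0$ and $Se_{n}=e_{n-1}$ for $n\geq 1$, on the standard basis $\{e_{n}\}_{n\geq 0}$ of $\ell^{2}$; then $S$ is Fredholm with $\ker S=\mathbb{C}e_{0}$ and $S$ surjective, so $\operatorname{ind}(S)=1$. Since $S^{*}S=I-P$ and $SS^{*}=I$, where $P$ is the rank-one projection onto $\mathbb{C}e_{0}$, the element $u:=\pi(S)$ is a unitary in the $C^{*}$-algebra $B(\ell^{2})/K(\ell^{2})$, and its spectrum is all of the unit circle because the essential spectrum of the unilateral shift is $\{w\in\mathbb{C}:|w|=1\}$. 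Consequently $\mathcal{A}:=C^{*}(u,1)$ is a separable unital $C^{*}$-subalgebra of $B(\ell^{2})/K(\ell^{2})$, and the continuous functional calculus gives an isometric isomorphism $C(\{w:|w|=1\})\to\mathcal{A}$ sending the coordinate function to $u$; in particular $\|f(u)\|=\sup_{w\in\mathbb{C},\,|w|=1}|f(w)|$ for every Laurent polynomial $f$.

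Next I would apply Theorem~\ref{mainthm} to this $\mathcal{A}$, obtaining a unital homomorphism $\phi:\mathcal{A}\to B(\ell^{p})/K(\ell^{p})$ with $\|\phi(a)\|\leq C_{p}\|a\|$ for all $a\in\mathcal{A}$ and with $\phi$ preserving the Fredholm index on elements invertible in $B(\ell^{2})/K(\ell^{2})$. The unitary $u$ is such an element, and $S$ is a lift of $u$, so $u$ has Fredholm index $1$ in the sense of the theorem; hence $\phi(u)$ is invertible in $B(\ell^{p})/K(\ell^{p})$ and has Fredholm index $1$. Choosing any $T\in B(\ell^{p})$ with $\pi(T)=\phi(u)$, we obtain a Fredholm operator on $\ell^{p}$ of index $1$.

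For the functional calculus estimate, note that since $\phi$ is a unital algebra homomorphism it preserves inverses, so $\phi(u)^{k}=\phi(u^{k})$ for every $k\in\mathbb{Z}$, and therefore for every Laurent polynomial $f=\sum_{k}c_{k}z^{k}$ we have
\[
f(\pi(T))=\sum_{k}c_{k}\,\phi(u)^{k}=\phi\Bigl(\sum_{k}c_{k}u^{k}\Bigr)=\phi\bigl(f(u)\bigr),
\]
so that $\|f(\pi(T))\|\leq C_{p}\|f(u)\|=C_{p}\sup_{w\in\mathbb{C},\,|w|=1}|f(w)|$, which is the desired inequality. Given Theorem~\ref{mainthm}, there is essentially no obstacle here: the only points requiring care are choosing the shift with the correct orientation so that the index comes out $+1$ rather than $-1$, recalling that the essential spectrum of the shift is the whole circle (so that $\|f(u)\|$ equals \emph{exactly} the sup-norm of $f$ on the circle, not merely a constant multiple of it), and observing that $f(\pi(T))$ is unambiguous for Laurent $f$ because $\pi(T)=\phi(u)$ is invertible in the Banach algebra $B(\ell^{p})/K(\ell^{p})$, so that negative powers make sense and intertwine with $\phi$.
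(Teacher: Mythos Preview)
Your proof is correct and follows essentially the same route as the paper: apply Theorem~\ref{mainthm} to the subalgebra of $B(\ell^{2})/K(\ell^{2})$ generated by the image of the unilateral shift, lift $\phi(\pi_{2}(U^{*}))$ to an operator $T$ on $\ell^{p}$, and read off both the index and the functional-calculus bound. You are somewhat more explicit than the paper about why $\|f(u)\|$ equals the sup-norm on the circle and why $\phi$ intertwines with Laurent polynomials, but the argument is the same.
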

\begin{proof}
Let $\pi_{2}:B(\ell^{2})\to B(\ell^{2})/K(\ell^{2})$ be the quotient map. Let $U$ and $U^{*}$ be the forward unilateral shift and the backward shift $U^{*}$ on $\ell^{2}$, respectively. In Theorem \ref{mainthm}, take $\mathcal{A}$ to be the subalgebra of $B(\ell^{2})/K(\ell^{2})$ generated by $\pi_{2}(U)$ and $\pi_{2}(U^{*})$. We obtain a unital homomorphism $\phi:\mathcal{A}\to B(\ell^{p})/K(\ell^{p})$ preserving the Fredholm index such that $\|\phi(a)\|\leq C_{p}\|a\|$ for all $a\in\mathcal{A}$. Take any $T\in B(\ell^{p})$ so that $\pi(T)=\phi(\pi_{2}(U^{*}))$. Then since $\phi$ preserves the Fredholm index, $T$ has Fredholm index $1$. Moreover,
\[\|f(\pi(T))\|=\|f(\phi(\pi_{2}(U^{*})))\|=\|\phi(f(\pi_{2}(U^{*})))\|\leq C_{p}\|f(\pi_{2}(U^{*}))\|=C_{p}\sup_{w\in\mathbb{C},\,|w|=1}|f(w)|,\]
for all Laurent polynomials $f$.
\end{proof}
\begin{corollary}
Let $M$ be a compact subset of $\mathbb{C}$ and let $C(M)$ be the algebra of scalar valued continuous functions on $M$. Let $O_{1},O_{2},\ldots$ be the bounded components of $\mathbb{C}\backslash M$. For each $O_{i}$, let $n_{i}\in\mathbb{Z}$. Let $z\in C(M)$ be the identity function. Then there exists a unital isomorphic homomorphism $\psi:C(M)\to B(\ell^{p})/K(\ell^{p})$ such that $\psi(z-\lambda)$ has Fredholm index $n_{i}$ for all $\lambda\in O_{i}$ and all $i$.
\end{corollary}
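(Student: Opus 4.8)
\subsection*{Proof proposal}

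The plan is to reduce the statement to Theorem~\ref{mainthm} by first modelling the data $(M,\{n_i\})$ inside the Calkin algebra of $\ell^2$ via Brown--Douglas--Fillmore theory. First I would invoke the realization part of the BDF theorem: there is an essentially normal operator $S$ on $\ell^2$ with essential spectrum $\sigma_e(S)=M$ such that $S-\lambda$ is Fredholm of index $n_i$ for every $\lambda\in O_i$ and every $i$. (This is the surjectivity of the index invariant; alternatively, $S$ can be built directly as a direct sum of a diagonalizable normal operator with spectrum $M$ and suitable weighted unilateral shifts, one associated with each $O_i$.) Writing $\pi_2\colon B(\ell^2)\to B(\ell^2)/K(\ell^2)$ for the quotient map, $\pi_2(S)$ is a normal element of the $C^*$-algebra $B(\ell^2)/K(\ell^2)$ whose spectrum equals $\sigma_e(S)=M$, so its continuous functional calculus yields an isometric unital $*$-homomorphism $\rho\colon C(M)\to B(\ell^2)/K(\ell^2)$ with $\rho(z)=\pi_2(S)$. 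Its image $\mathcal{A}:=\rho(C(M))$ is a separable unital (commutative) subalgebra of $B(\ell^2)/K(\ell^2)$.

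Next I would apply Theorem~\ref{mainthm} to $\mathcal{A}$ to obtain a unital homomorphism $\phi\colon\mathcal{A}\to B(\ell^p)/K(\ell^p)$ with $\frac{1}{C_p}\|a\|\le\|\phi(a)\|\le C_p\|a\|$ for all $a\in\mathcal{A}$, which moreover preserves the Fredholm index on those $a\in\mathcal{A}$ that are invertible in $B(\ell^2)/K(\ell^2)$. Set $\psi:=\phi\circ\rho\colon C(M)\to B(\ell^p)/K(\ell^p)$. Then $\psi$ is a unital homomorphism, and since $\rho$ is isometric we get $\frac{1}{C_p}\|f\|\le\|\psi(f)\|\le C_p\|f\|$ for all $f\in C(M)$, so $\psi$ is isomorphic. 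For the index statement, fix $\lambda\in O_i$; since $\lambda\notin M$, the function $z-\lambda$ is invertible in $C(M)$, hence $\rho(z-\lambda)=\pi_2(S-\lambda)$ is invertible in $\mathcal{A}$ and therefore, by spectral permanence for $C^*$-subalgebras, invertible in $B(\ell^2)/K(\ell^2)$; its Fredholm index is $n_i$ by the choice of $S$. The ``moreover'' clause of Theorem~\ref{mainthm} then gives that $\psi(z-\lambda)=\phi(\rho(z-\lambda))$ is invertible in $B(\ell^p)/K(\ell^p)$ with Fredholm index $n_i$, which is the desired conclusion.

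The only substantive ingredient is the BDF realization step, i.e.\ producing the essentially normal model $S$ on $\ell^2$ with prescribed essential spectrum $M$ and index data $\{n_i\}$; this is the point at which the Brown--Douglas--Fillmore theory genuinely enters. Everything afterwards is formal: forming the composition $\phi\circ\rho$, combining the isometry of $\rho$ with the distortion constant $C_p$ for the norm estimates, and transporting Fredholm indices through the ``moreover'' part of Theorem~\ref{mainthm}. Accordingly I expect no real obstacle beyond correctly invoking (or constructing) $S$.
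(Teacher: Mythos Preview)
Your proposal is correct and follows essentially the same route as the paper: invoke BDF to get a unital isomorphic homomorphism $\rho:C(M)\to B(\ell^{2})/K(\ell^{2})$ with the prescribed index data, apply Theorem~\ref{mainthm} to its separable range, and compose. The paper's proof is identical in outline (writing $\phi_{1}$ for your $\rho$ and $\phi_{2}$ for your $\phi$), only shorter; your added remarks on functional calculus and spectral permanence are fine but not needed.
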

\begin{proof}
Brown, Douglas and Fillmore \cite{bdf} show that there is a unital isomorphic homomorphism $\phi_{1}:C(M)\to B(\ell^{2})/K(\ell^{2})$ such that $\phi_{1}(z-\lambda)$ has Fredholm index $n_{i}$ for all $\lambda\in O_{i}$ and all $i$. Let $\mathcal{A}$ be the range of $\phi_{1}$. By Theorem \ref{mainthm}, there is a unital isomorphic homomorphism $\phi_{2}:\mathcal{A}\to B(\ell^{p})/K(\ell^{p})$ such that $\phi_{2}(a)$ and $a$ have the same Fredholm index for every $a\in\mathcal{A}$ that is invertible in $B(\ell^{2})/K(\ell^{2})$. The result follows by taking $\psi=\phi_{2}\circ\phi_{1}$.
\end{proof}
We end this section by sketching the proof of the main result Theorem \ref{mainthm}. Let $\mathcal{B}$ be the inverse image of $\mathcal{A}$ in $B(\ell^{2})$. Since $\mathcal{B}$ is separable, all the operators in $\mathcal{B}$ can be simultaneously block tridiagonalized, up to compact perturbations, with respect to some decomposition of $\ell^{2}=\mathcal{H}_{1}\oplus\mathcal{H}_{2}\oplus\ldots$, where $\mathcal{H}_{1},\mathcal{H}_{2},\ldots$ are finite dimensional Hilbert spaces. By block tridiagonality, for each given operator $T\in\mathcal{B}$ on $\ell^{2}$, if we replace the space $\ell^{2}=\mathcal{H}_{1}\oplus\mathcal{H}_{2}\oplus\ldots$ by the $\ell^{p}$ direct sum $X_{p}=(\mathcal{H}_{1}\oplus\mathcal{H}_{2}\oplus\ldots)_{p}$ but keeping the same block matrix representation of $T$, then the essential norm of the induced operator on $X_{p}$ is equivalent to the essential norm of the original operator $T$ up to a constant multiple of $3$. This defines an isomorphic homomorphism from $\mathcal{A}$ into $B(X_{p})/K(X_{p})$. Finally, since the Banach space $X_{p}$ is isomorphic to $\ell^{p}$ by a Pe{\l}czynski decomposition argument, this establishes the isomorphic homomorphism $\phi$ in the conclusion of Theorem \ref{mainthm}. The preservation of the Fredholm index follows from a connectivity argument.
\section{Proof of the main result}
The $\ell^{p}$-direct sum of a sequence $E_{1},E_{2},\ldots$ of Banach spaces is denoted by $(\bigoplus_{k\in\mathbb{N}}E_{k})_{p}$. The essential norm $\|T\|_{e}$ of an operator $T:E\to F$ is the distance between $T$ and the space of all compact operators from $E$ to $F$.

The space $\ell^{2}$ consists of all $\ell^{2}$-summable scalar valued functions on $\mathbb{N}$. If $S\subset\mathbb{N}$, then $\ell^{2}(S)$ consists of all $x\in\ell^{2}$ such that $x(j)=0$ for all $j\notin S$. Note that if $S_{1},S_{2},\ldots$ is a partition of $\mathbb{N}$, we can identify $\ell^{2}$ as $\bigoplus_{k=1}^{\infty}\ell^{2}(S_{k})$.

\begin{lemma}\label{blockbound}
Let $p\in(1,\infty)$ and let $E_{1},E_{2},\ldots$ be finite dimensional Banach spaces. Suppose that we are given an array of operators $T_{i,j}:E_{j}\to E_{i}$, for $i,j\in\mathbb{N}$, such that $\sup_{i,j}\|T_{i,j}\|<\infty$ and $T_{i,j}=0$ whenever $|i-j|>m$ for some fixed $m\in\mathbb{N}$. Then the operator on $(\bigoplus_{k\in\mathbb{N}}E_{k})_{p}$ for which the block matrix representation consists of the array $(T_{i,j})_{i,j\in\mathbb{N}}$ is well defined and has norm between $\sup_{i,j}\|T_{i,j}\|$ and $(2m+1)\sup_{i,j}\|T_{i,j}\|$.
\end{lemma}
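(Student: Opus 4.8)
The plan is to prove the two norm bounds separately and then obtain well-definedness from the upper bound by a density argument. Throughout, write $X:=(\bigoplus_{k\in\mathbb{N}}E_{k})_{p}$, let $\iota_{j}\colon E_{j}\to X$ be the canonical inclusion and $P_{i}\colon X\to E_{i}$ the canonical coordinate projection, and recall that in an $\ell^{p}$-direct sum each $\iota_{j}$ is an isometry and each $P_{i}$ has norm one. For the lower bound, note that by the definition of the block matrix representation we have $T_{i,j}=P_{i}T\iota_{j}$, so $\|T_{i,j}\|\le\|P_{i}\|\,\|T\|\,\|\iota_{j}\|=\|T\|$ for all $i,j$, hence $\sup_{i,j}\|T_{i,j}\|\le\|T\|$.

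For the upper bound I would decompose the band operator into its diagonals. For each integer $d$ with $-m\le d\le m$, let $T^{(d)}$ be the operator whose only nonzero blocks are the $T_{i,j}$ with $i-j=d$; then $T=\sum_{d=-m}^{m}T^{(d)}$, a sum of $2m+1$ terms. On a finitely supported $x=(x_{k})_{k}\in X$ one has $(T^{(d)}x)_{k}=T_{k,k-d}\,x_{k-d}$, with the convention $x_{\ell}=0$ for $\ell\notin\mathbb{N}$, so that
\[
\|T^{(d)}x\|_{p}^{p}=\sum_{k}\|T_{k,k-d}\,x_{k-d}\|^{p}\le\Big(\sup_{i,j}\|T_{i,j}\|\Big)^{p}\sum_{k}\|x_{k-d}\|^{p}\le\Big(\sup_{i,j}\|T_{i,j}\|\Big)^{p}\|x\|_{p}^{p},
\]
the last inequality holding because $k\mapsto k-d$ is injective. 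Thus $\|T^{(d)}\|\le\sup_{i,j}\|T_{i,j}\|$ for each $d$, and summing over the $2m+1$ values of $d$ yields $\|T\|\le(2m+1)\sup_{i,j}\|T_{i,j}\|$.

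For well-definedness, observe that the formula for $Tx$ makes sense for every finitely supported $x\in X$, and the band condition $T_{i,j}=0$ for $|i-j|>m$ forces $Tx$ to be finitely supported as well, so $Tx\in X$; the estimate above gives $\|Tx\|_{p}\le(2m+1)\sup_{i,j}\|T_{i,j}\|\,\|x\|_{p}$ on this dense subspace. Hence $T$ extends uniquely to a bounded operator on $X$ satisfying the same bound, and $P_{i}T\iota_{j}=T_{i,j}$ holds by construction, so the extension does realize the prescribed block matrix.

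I do not expect a genuine obstacle here; the argument is elementary. The only points that require a little care are the bookkeeping in the diagonal decomposition---in particular, that it is the injectivity of the index shift $k\mapsto k-d$ that makes each single diagonal $T^{(d)}$ contractive (up to the factor $\sup_{i,j}\|T_{i,j}\|$) rather than merely bounded, which is exactly why the final constant is the number of diagonals $2m+1$---and the routine check that the continuous extension genuinely has block matrix $(T_{i,j})_{i,j}$. (Finite-dimensionality of the $E_{k}$ plays no role in this lemma; it is a hypothesis that matters elsewhere.)
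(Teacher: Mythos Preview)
Your proof is correct and follows essentially the same approach as the paper: both decompose the band operator into its $2m+1$ diagonals, bound each diagonal by $\sup_{i,j}\|T_{i,j}\|$ via the $\ell^{p}$-direct-sum structure, and obtain the lower bound from $T_{i,j}=P_{i}T\iota_{j}$. The only cosmetic difference is that the paper establishes well-definedness by showing strong convergence of the block-matrix sum directly, whereas you extend from finitely supported vectors by density; the two are equivalent here.
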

\begin{proof}
For each $i\in\mathbb{N}$, let $J_{i}$ be the canonical embedding of $E_{i}$ into $(\bigoplus_{k\in\mathbb{N}}E_{k})_{p}$ and let $Q_{i}$ be the canonical projection from $(\bigoplus_{k\in\mathbb{N}}E_{k})_{p}$ onto $E_{i}$. For notational convenience, let $J_{-i}=0$ and $T_{-i,j}=0$ for all $i\geq 0$ and $j\in\mathbb{N}$. The operator in the statement of this result is given by
\[\sum_{i,j\in\mathbb{N}}J_{i}T_{i,j}Q_{j}=\sum_{s=-m}^{m}\sum_{j=1}^{\infty}J_{j+s}T_{j+s,j}Q_{j},\]
where this sum converges in the strong operator topology. To prove that this sum indeed converges, note that for $s\in\mathbb{Z}$, $k_{1}\leq k_{2}$ in $\mathbb{N}$ and $x\in(\bigoplus_{k\in\mathbb{N}}E_{k})_{p}$, we have
\begin{eqnarray}\label{jtqnorm}
\left\|\sum_{j=k_{1}}^{k_{2}}J_{j+s}T_{j,j+s}Q_{j}x\right\|&=&\left(\sum_{j=k_{1}}^{k_{2}}\|T_{j+s,j}Q_{j}x\|^{p}\right)^{\frac{1}{p}}\\&\leq&
\sup_{j\in\mathbb{N}}\|T_{j+s,j}\|\left(\sum_{j=k_{1}}^{k_{2}}\|Q_{j}x\|^{p}\right)^{\frac{1}{p}},\nonumber
\end{eqnarray}
and since $\sum_{j=1}^{\infty}\|Q_{j}x\|^{p}=\|x\|^{p}$, it follows that $\sum_{j=1}^{\infty}J_{j+s}T_{j,j+s}Q_{j}x$ converges. To estimate the norm of the operator $\sum_{i,j\in\mathbb{N}}J_{i}T_{i,j}Q_{j}$, observe that the norm of this operator is at least the norm of every block $\|J_{i}T_{i,j}Q_{j}\|=\|T_{i,j}\|$. On the other hand,
\begin{eqnarray*}
\left\|\sum_{i,j\in\mathbb{N}}J_{i}T_{i,j}Q_{j}\right\|&=&\left\|\sum_{s=-m}^{m}\sum_{j=1}^{\infty}J_{j+s}T_{j+s,j}Q_{j}\right\|\\&\leq&
\sum_{s=-m}^{m}\left\|\sum_{j=1}^{\infty}J_{j+s}T_{j+s,j}Q_{j}\right\|\\&\leq&
\sum_{s=-m}^{m}\sup_{j\in\mathbb{N}}\|T_{j+s,j}\|\leq(2m+1)\sup_{i,j\in\mathbb{N}}\|T_{i,j}\|,
\end{eqnarray*}
where the second inequality follows from (\ref{jtqnorm}).
\end{proof}
\begin{lemma}\label{mainlemma}
Let $p\in(1,\infty)$ and $0=r_{1}<r_{2}<\ldots$. For each $m\in\mathbb{N}$, let
\[\mathcal{W}_{m}=\{T\in B(\ell^{2}):\,T\ell^{2}([r_{k}+1,r_{k+1}])\subset\bigoplus_{i=k-m}^{k+m}\ell^{2}([r_{i}+1,r_{i+1}])\,\forall k\in\mathbb{N}\},\]
where $r_{-i}=0$ for every $i\geq 0$. Consider the subalgebra $\bigcup_{m\in\mathbb{N}}\mathcal{W}_{m}$ of $B(\ell^{2})$. Let $X_{p}$ be the Banach space $(\bigoplus_{k\in\mathbb{N}}\ell^{2}([r_{k}+1,r_{k+1}]))_{p}$. Then there exists a unital homomorphism $\Psi:\bigcup_{m\in\mathbb{N}}\mathcal{W}_{m}\to B(X_{p})$ such that
\[\frac{1}{2m+1}\|T\|\leq\|\Psi(T)\|\leq(2m+1)\|T\|,\]
and
\[\frac{1}{2m+1}\|T\|_{e}\leq\|\Psi(T)\|_{e}\leq(2m+1)\|T\|_{e},\]
for all $T\in\mathcal{W}_{m}$ and $m\in\mathbb{N}$. Moreover, if $U^{*}$ is the backward unilateral shift on $\ell^{2}$, then $U^{*}\in\mathcal{W}_{1}$ and $\Psi(U^{*})$ has Fredholm index $1$.
\end{lemma}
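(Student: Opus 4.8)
The plan is to let $\Psi(T)$ be the operator on $X_p$ with \emph{the same block matrix} as $T$, relative to the decomposition of both $\ell^2$ and $X_p$ into the finite-dimensional pieces $\ell^2([r_k+1,r_{k+1}])$. Concretely, for $T\in\mathcal W_m$ write $T_{i,j}\colon\ell^2([r_j+1,r_{j+1}])\to\ell^2([r_i+1,r_{i+1}])$ for the $(i,j)$-block of $T$; the defining condition of $\mathcal W_m$ says exactly that $T_{i,j}=0$ whenever $|i-j|>m$, and obviously $\sup_{i,j}\|T_{i,j}\|\le\|T\|<\infty$. By Lemma~\ref{blockbound} the array $(T_{i,j})$ defines a bounded operator on $X_p$, and I set $\Psi(T)$ equal to it; this is independent of the choice of $m$ with $T\in\mathcal W_m$, so $\Psi$ is well defined on $\bigcup_m\mathcal W_m$. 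Linearity and $\Psi(I)=I$ are immediate, and for multiplicativity one notes that if $S\in\mathcal W_{m_1}$ and $T\in\mathcal W_{m_2}$ then $ST\in\mathcal W_{m_1+m_2}$ with $(ST)_{i,j}=\sum_kS_{i,k}T_{k,j}$ (a finite sum), and then combines the strong-operator convergence of the defining series in Lemma~\ref{blockbound} with $Q_kJ_l=\delta_{k,l}\,\mathrm{id}$ to get $\Psi(S)\Psi(T)=\Psi(ST)$.

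For the operator-norm estimates I would use that for a band-$m$ array $(A_{i,j})$ over $(\bigoplus_kE_k)_q$ with finite-dimensional $E_k$ one has
\[
\sup_{i,j}\|A_{i,j}\|\ \le\ \Bigl\|\sum_{i,j}J_iA_{i,j}Q_j\Bigr\|\ \le\ (2m+1)\sup_{i,j}\|A_{i,j}\|
\]
for every $q\in(1,\infty)$: the upper bound is Lemma~\ref{blockbound}, and the lower bound just says a block of an operator has norm at most that of the operator. Applying this with $q=p$ to $\Psi(T)$ and with $q=2$ to $T$ itself gives $\frac1{2m+1}\|T\|\le\|\Psi(T)\|\le(2m+1)\|T\|$ for $T\in\mathcal W_m$.

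For the essential-norm estimates, let $P_n$ and $\tilde P_n$ be the projections of $\ell^2$ and of $X_p$ onto the first $n$ summands; these have finite rank because the summands are finite dimensional. Put $R_n=I-P_n$ and $\tilde R_n=I-\tilde P_n$. The key claim is that $\|T\|_e=\lim_n\|R_nTR_n\|$ and $\|\Psi(T)\|_e=\lim_n\|\tilde R_n\Psi(T)\tilde R_n\|$. For the first: $T-R_nTR_n=P_nT+R_nTP_n$ has finite rank, so $\|T\|_e\le\|R_nTR_n\|_e\le\|R_nTR_n\|$; and for any compact $K$ we have $\|R_nTR_n\|\le\|T-K\|+\|R_nK\|$ with $\|R_nK\|\to0$ since $R_n\to0$ strongly and $K$ is compact, whence $\limsup_n\|R_nTR_n\|\le\|T\|_e$. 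The same argument, word for word, handles $\Psi(T)$ on $X_p$. Now $R_nTR_n$ is the band-$m$ array $(T_{i,j})_{i,j>n}$ acting on $\bigoplus_{k>n}\ell^2([r_k+1,r_{k+1}])$, and $\tilde R_n\Psi(T)\tilde R_n$ is that same array acting on the corresponding $\ell^p$-sum; applying the displayed two-sided estimate to both gives $\frac1{2m+1}\|R_nTR_n\|\le\|\tilde R_n\Psi(T)\tilde R_n\|\le(2m+1)\|R_nTR_n\|$, and letting $n\to\infty$ yields $\frac1{2m+1}\|T\|_e\le\|\Psi(T)\|_e\le(2m+1)\|T\|_e$.

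Finally, for the backward shift: since $0=r_1<r_2<\cdots$, for each $n\in[r_k+1,r_{k+1}]$ the index $n-1$ lies in $\{0\}\cup[r_{k-1}+1,r_{k+1}]$, so $U^*\in\mathcal W_1$. Identifying the underlying vector space of $X_p$ with scalar sequences on $\mathbb N$ via the partition $\{[r_k+1,r_{k+1}]\}_k$, a direct inspection of the blocks of $U^*$ shows that $\Psi(U^*)$ is the backward shift $e_n\mapsto e_{n-1}$ ($n\ge2$), $e_1\mapsto0$ on $X_p$. Its kernel is the one-dimensional space $\mathrm{span}\{e_1\}$, and it is surjective: the preimage of $z=(z_n)_n\in X_p$ is $(0,z_1,z_2,\dots)$, which still lies in $X_p$ because passing from $z$ to $(0,z_1,z_2,\dots)$ only moves the last coordinate of each block into the next block, so the $X_p$-norm grows by at most a factor $2$. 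Hence $\Psi(U^*)$ is Fredholm of index $1-0=1$. Most of the argument — the construction and homomorphism property, the operator-norm bounds, and the surjectivity computation — is bookkeeping resting on Lemma~\ref{blockbound}; the one place where a genuine input is needed is the characterization $\|T\|_e=\lim_n\|R_nTR_n\|$, namely the strong convergence $R_n\to0$ combined with compactness of $K$, so I expect that to be the main point to get right, uniformly for $\ell^2$ and for $X_p$.
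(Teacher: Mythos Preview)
Your proof is correct and follows the paper's construction: define $\Psi$ by transporting the block matrix of $T$ from the $\ell^2$-sum to the $\ell^p$-sum, and invoke Lemma~\ref{blockbound} on both sides for the norm bounds. There are two small differences in execution. For the essential-norm inequality, the paper argues by noting that the operators $F$ with finitely many nonzero matrix entries lie in $\bigcup_m\mathcal W_m$, that $\overline{\mathcal F}=K(\ell^2)$ and $\overline{\Psi(\mathcal F)}=K(X_p)$, and then passes the norm inequality through the infimum over $F\in\mathcal F$; your route via $\|T\|_e=\lim_n\|R_nTR_n\|$ (and the analogous statement on $X_p$) is slightly different but equally short, and has the advantage that $R_nTR_n$ manifestly stays in $\mathcal W_m$, so the constant $2m+1$ applies without further comment. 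For the surjectivity of $\Psi(U^*)$, the paper simply observes that $\Psi(U^*)\Psi(U)=\Psi(U^*U)=\Psi(I)=I$, which is a one-line consequence of the homomorphism property you already established; your direct verification that the shifted sequence lies in $X_p$ is fine but unnecessary once multiplicativity is in hand.
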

\begin{proof}
Since every $T\in\mathcal{W}_{m}$ is an operator on $\ell^{2}=(\bigoplus_{k\in\mathbb{N}}\ell^{2}([r_{k}+1,r_{k+1}]))_{2}$, we can define an operator $\Psi(T)$ on $X_{p}=(\bigoplus_{k\in\mathbb{N}}\ell^{2}([r_{k}+1,r_{k+1}]))_{p}$ by keeping the same block matrix representation of $T$ with respect to the decomposition $\bigoplus_{k\in\mathbb{N}}\ell^{2}([r_{k}+1,r_{k+1}])$. In view of Lemma \ref{blockbound}, this operator $\Psi(T)$ is well defined and
\[\sup_{i,j}\|T_{i,j}\|\leq\|\Psi(T)\|\leq(2m+1)\sup_{i,j}\|T_{i,j}\|.\]
And applying Lemma \ref{blockbound} for $p=2$ to the operator $T$ itself, we also have
\[\sup_{i,j}\|T_{i,j}\|\leq\|T\|\leq(2m+1)\sup_{i,j}\|T_{i,j}\|.\]
Therefore,
\begin{equation}\label{Psibound}
\frac{1}{2m+1}\|T\|\leq\|\Psi(T)\|\leq(2m+1)\|T\|,
\end{equation}
for all $T\in\mathcal{W}_{m}$.

Let $\mathcal{F}$ be the set of all $T\in B(\ell^{2})$ with finitely many nonzero entries in its canonical matrix representation. Note that $\mathcal{F}\subset\bigcup_{m\in\mathbb{N}}\mathcal{W}_{m}$. Observe that the norm closure $\mathcal{F}$ coincides with the set of all compact operators on $\ell^{2}$, whereas the norm closure of $\{\Psi(F):F\in\mathcal{F}\}$ coincides with the set of all compact operators on $X_{p}$. Since by (\ref{Psibound}),
\[\frac{1}{2m+1}\inf_{F\in\mathcal{F}}\|T+F\|\leq\inf_{F\in\mathcal{F}}\|\Psi(T)+\Psi(F)\|\leq(2m+1)\inf_{F\in\mathcal{F}}\|T+F\|,\]
it follows that
\[\frac{1}{2m+1}\|T\|_{e}\leq\|\Psi(T)\|_{e}\leq(2m+1)\|T\|_{e},\]
for all $T\in\mathcal{W}_{m}$.

It is obvious that $\Psi$ is a unital homomorphism by identifying all the operators in the domain and range as formal block matrices. This identification is valid since all the $T$ in the domain are in $\mathcal{W}_{m}$ for some $m\in\mathbb{N}$ and thus every sum that arises in computing the product of two formal block matrices is a finite sum.

To prove the moreover statement of this result, it is easy to see from the block matrix representation that both the forward unilateral shift $U$ and the backward unilateral shift $U^{*}$ are in $\mathcal{W}_{1}$. Let $(z_{s})_{s\in\mathbb{N}}$ be the canonical basis for $X_{p}=(\bigoplus_{k\in\mathbb{N}}\ell^{2}([r_{k}+1,r_{k+1}]))_{p}$. Then by the definition of $\Psi$, we have $\Psi(U)z_{s}=z_{s+1}$ and $\Psi(U^{*})z_{s}=z_{s-1}$ for all $s\in\mathbb{N}$, where $z_{0}=0$. It is easy to see that the kernel of the operator $\Psi(U^{*})\in B(X_{p})$ is spanned by $z_{1}$ and thus has dimension $1$. Since $\Psi(U^{*})\Psi(U)=\Psi(U^{*}U)=\Psi(I)=I$, the operator $\Psi(U^{*})$ is surjective. Therefore, the Fredholm index of $\Psi(U^{*})$ is $1$.
\end{proof}
The following result is inspired by the proof of \cite[Lemma 1.2]{NCweylvon}.
\begin{lemma}\label{blocktridiag}
Let $\mathcal{B}$ be a separable subset of $B(\ell^{2})$. Then there are $0=r_{1}<r_{2}<\ldots$ such that $T-\sum_{k=1}^{\infty}(Q_{k-1}+Q_{k}+Q_{k+1})TQ_{k}$ is compact for every $T\in\mathcal{B}$, where $Q_{0}=0$ and $Q_{k}$ is the canonical projection from $\ell^{2}$ onto $\ell^{2}([r_{k}+1,r_{k+1}])$. Here the sum $\sum_{k=1}^{\infty}(Q_{k-1}+Q_{k}+Q_{k+1})TQ_{k}$ is in the strong operator topology.
\end{lemma}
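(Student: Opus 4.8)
The plan is to construct $0=r_{1}<r_{2}<\cdots$ recursively, arranging that the off-tridiagonal blocks of each operator in a fixed countable dense subset of $\mathcal{B}$ are summably small, and then to pass from this dense subset to all of $\mathcal{B}$ by a closedness argument. Write $\Phi(T)=\sum_{k}(Q_{k-1}+Q_{k}+Q_{k+1})TQ_{k}$; this is the block tridiagonal truncation of $T$ (its block in position $(i,j)$ is $Q_{i}TQ_{j}$ when $|i-j|\le 1$ and $0$ otherwise). By Lemma \ref{blockbound}, applied with $m=1$ and $p=2$ to the array $(Q_{i}TQ_{j})_{|i-j|\le1}$ of operators between the finite dimensional spaces $\ell^{2}([r_{k}+1,r_{k+1}])$, the map $T\mapsto\Phi(T)$ is bounded on $B(\ell^{2})$. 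Hence $\{T\in B(\ell^{2}):\,T-\Phi(T)\in K(\ell^{2})\}$ is norm closed, and it suffices to arrange that $T-\Phi(T)$ is compact for every $T$ in a fixed dense sequence $T_{1},T_{2},\ldots$ in $\mathcal{B}$.

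For the recursion, let $P_{n}$ be the orthogonal projection of $\ell^{2}$ onto $\ell^{2}([1,n])$, so that once $r_{1},\ldots,r_{k}$ are chosen the projections $Q_{1},\ldots,Q_{k-1}$ are determined, and $P_{r_{k+1}}=Q_{1}+\cdots+Q_{k}$ as soon as $r_{k+1}$ is chosen. Since $Q_{k-1}$ has finite rank, $Q_{k-1}T_{j}$ and $T_{j}Q_{k-1}$ have finite rank, so $\|Q_{k-1}T_{j}(I-P_{n})\|\to0$ and $\|(I-P_{n})T_{j}Q_{k-1}\|\to0$ as $n\to\infty$. I would therefore choose, at stage $k$, an integer $r_{k+1}>r_{k}$ so large that
\[\|Q_{k-1}T_{j}(I-P_{r_{k+1}})\|\le 2^{-k}\qquad\text{and}\qquad\|(I-P_{r_{k+1}})T_{j}Q_{k-1}\|\le 2^{-k}\]
for all $j\le k$. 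Since $r_{k+1}=r_{(k-1)+2}$, this controls, for each $T_{j}$, all the off-tridiagonal blocks lying in row or column $k-1$; letting $k$ run over $\mathbb{N}$, every off-tridiagonal block of $T_{j}$ in row or column $\ge j-1$ is eventually controlled.

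Now fix $T=T_{j}$ and decompose $T-\Phi(T)=\sum_{|i-l|\ge2}Q_{i}TQ_{l}$ into its upper and lower parts, $\sum_{i\ge1}Q_{i}T(I-P_{r_{i+2}})$ and $\sum_{i\ge1}(I-P_{r_{i+2}})TQ_{i}$. In the first series, discard the finitely many terms with $i<j-1$ (their sum is finite rank); the remaining terms have norm $\le2^{-(i+1)}$ and, crucially, pairwise orthogonal ranges, so the partial sums converge in norm (the tail from index $N$ has norm at most $(\sum_{i\ge N}4^{-(i+1)})^{1/2}$) and the limit is compact, being a norm limit of finite rank operators. The lower part is handled the same way after taking adjoints, since $\left(\sum_{i}(I-P_{r_{i+2}})TQ_{i}\right)^{*}=\sum_{i}Q_{i}T^{*}(I-P_{r_{i+2}})$ has pairwise orthogonal ranges and, by the second bound above, summands of norm $\le2^{-(i+1)}$ once $i\ge j-1$. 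Thus $T-\Phi(T)$ is compact for every $T_{j}$, and by the closedness noted in the first paragraph it is compact for every $T\in\mathcal{B}$.

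The argument is essentially routine, and the step most likely to require care is the bookkeeping in the recursion — ensuring that, for each operator in the dense sequence, all but finitely many off-tridiagonal blocks are made summably small while the remaining ones form a finite rank operator. The other ingredient worth isolating is the elementary fact that a sum $\sum_{i}A_{i}$ of operators with pairwise orthogonal ranges and $\sum_{i}\|A_{i}\|^{2}<\infty$ converges in norm and is therefore compact. I do not expect a genuine obstacle beyond these routine points.
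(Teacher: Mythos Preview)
Your proof is correct and follows essentially the same approach as the paper: choose a dense sequence, recursively pick $r_{k}$ so that the off-tridiagonal blocks of each $T_{j}$ are summably small from some stage on, note that the tridiagonal truncation $\Phi$ is bounded (so the set $\{T:T-\Phi(T)\in K(\ell^{2})\}$ is closed), and conclude. The only cosmetic difference is bookkeeping: the paper controls $\|(I-(Q_{k-1}+Q_{k}+Q_{k+1}))T_{i}Q_{k}\|$ column by column, whereas you split $T-\Phi(T)$ into an upper and a lower triangular part and exploit orthogonality of ranges row by row; both lead to the same norm-convergent series of finite-rank operators.
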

\begin{proof}
Since $\mathcal{B}$ is separable, there exists a dense sequence $T_{1},T_{2},\ldots$ in $\mathcal{B}$. We first construct $0=r_{1}<r_{2}<\ldots$ such that
\begin{equation}\label{tridiagerror}
\|(I-(Q_{k-1}+Q_{k}+Q_{k+1}))T_{i}Q_{k}\|\leq\frac{1}{2^{k}},
\end{equation}
for all $k,i\in\mathbb{N}$ with $k\geq i$. We will then show that for such $r_{1},r_{2},\ldots$, the conclusion of the result holds.

For $r\in\mathbb{N}$, let $P_{r}$ be the canonical projection from $\ell^{2}$ onto $\ell^{2}([1,r])$. Let $r_{1}=0$ and $P_{0}=0$. For a fixed $m\geq 2$, if $r_{1}<\ldots<r_{m-1}$ have already been chosen, then we take $r_{m}\in\mathbb{N}$ to be large enough so that $r_{m}>r_{m-1}$,
\[\|(I-P_{r_{m}})T_{i}P_{r_{m-1}}\|\leq\frac{1}{2^{m+1}},\]
and
\[\|(I-P_{r_{m}})T_{i}^{*}P_{r_{m-1}}\|\leq\frac{1}{2^{m+1}},\]
for all $i=1,\ldots,m$. By construction, these two inequalities hold for all $m,i\in\mathbb{N}$ with $i\leq m$. Thus, for given $k,i\in\mathbb{N}$ with $i\leq k$, taking $m=k$ gives
\[\|(I-P_{r_{k}})T_{i}^{*}P_{r_{k-1}}\|\leq\frac{1}{2^{k+1}},\]
and taking $m=k+2$ gives
\[\|(I-P_{r_{k+2}})T_{i}P_{r_{k+1}}\|\leq\frac{1}{2^{k+3}}.\]
So
\begin{align*}
&\|(I-(P_{r_{k+2}}-P_{r_{k-1}}))T_{i}(P_{r_{k+1}}-P_{r_{k}})\|\\\leq&
\|(I-P_{r_{k+2}})T_{i}(P_{r_{k+1}}-P_{r_{k}})\|+\|P_{r_{k-1}}T_{i}(P_{r_{k+1}}-P_{r_{k}})\|\\\leq&
\|(I-P_{r_{k+2}})T_{i}P_{r_{k+1}}\|+\|P_{r_{k-1}}T_{i}(I-P_{r_{k}})\|\leq\frac{1}{2^{k+3}}+\frac{1}{2^{k+1}}<\frac{1}{2^{k}}.
\end{align*}
This gives (\ref{tridiagerror}).

We are now ready to show that $T-\sum_{k=1}^{\infty}(Q_{k-1}+Q_{k}+Q_{k+1})TQ_{k}$ is compact for every $T\in\mathcal{B}$. First, by an argument similar to the proof of Lemma \ref{blockbound}, the series
\begin{equation}\label{gamma}
\Gamma(T):=\sum_{k=1}^{\infty}(Q_{k-1}+Q_{k}+Q_{k+1})TQ_{k}
\end{equation}
always converges in the strong operator topology for every $T\in B(\ell^{2})$. Moreover, the linear map $\Gamma:B(\ell^{2})\to B(\ell^{2})$ defined by (\ref{gamma}) is bounded. From (\ref{tridiagerror}), we have that $T_{i}-\Gamma(T_{i})$ is compact for every $i\in\mathbb{N}$. So by the denseness of the set $\{T_{1},T_{2},\ldots\}$ in $\mathcal{B}$, we conclude that $T-\Gamma(T)$ is compact for every $T\in\mathcal{B}$. Thus the result follows.
\end{proof}
\begin{proof}[Proof of Theorem \ref{mainthm}]
Let $\pi_{2}:B(\ell^{2})\to B(\ell^{2})/K(\ell^{2})$ be the quotient map. Let $U$ be the forward unilateral shift on $\ell^{2}$. Let $\mathcal{C}$ be a countable dense subset of the set of all elements of $\mathcal{A}$ that are invertible in $B(\ell^{2})/K(\ell^{2})$. Recall that the set of all invertible elements of $B(\ell^{2})/K(\ell^{2})$ with a fixed Fredholm index $k$ is path connected \cite{conway}. Thus, for every $a\in\mathcal{C}$, there is a continuous path $f_{a}:[0,1]\to B(\ell^{2})/K(\ell^{2})$ such that
\begin{enumerate}[(1)]
\item $f_{a}(0)=a$;
\item $f_{a}(1)=\pi_{2}(U)^{-\mathrm{ind}\,a}$, where $\mathrm{ind}\,a$ is the Fredholm index of $a$; and
\item $f_{a}(t)$ is invertible in $B(\ell^{2})/K(\ell^{2})$ for all $t\in[0,1]$.
\end{enumerate}
Since $f_{a}$ is continuous, $\{f_{a}(t):t\in[0,1]\}$ is separable.

Let $\mathcal{B}$ be the subalgebra of $B(\ell^{2})$ generated by $\pi_{2}^{-1}(\mathcal{A})$, $U$, $U^{*}$ and $\pi_{2}^{-1}(f_{a}(t))$ among all $a\in\mathcal{C}$ and $t\in[0,1]$. Note that $\mathcal{B}$ is separable.

Apply Lemma \ref{blocktridiag} and obtain $0=r_{1}<r_{2}<\ldots$ such that $T-\sum_{k=1}^{\infty}(Q_{k-1}+Q_{k}+Q_{k+1})TQ_{k}$ is compact for every $T\in\mathcal{B}$, where $Q_{0}=0$ and $Q_{k}$ is the canonical projection from $\ell^{2}$ onto $\ell^{2}([r_{k}+1,r_{k+1}])$. Thus, every $T\in\mathcal{B}$ is a compact perturbation of an operator in
\[\mathcal{W}_{1}=\{T\in B(\ell^{2}):\,T\ell^{2}([r_{k}+1,r_{k+1}])\subset\bigoplus_{i=k-1}^{k+1}\ell^{2}([r_{i}+1,r_{i+1}])\,\forall k\in\mathbb{N}\}.\]
Let $X_{p}=(\bigoplus_{k\in\mathbb{N}}\ell^{2}([r_{k}+1,r_{k+1}]))_{p}$. Apply Lemma \ref{mainlemma} to obtain a unital homomorphism $\Psi:\bigcup_{m\in\mathbb{N}}\mathcal{W}_{m}\to B(X_{p})$ such that
\begin{equation}\label{Psiboundess}
\frac{1}{2m+1}\|T\|_{e}\leq\|\Psi(T)\|_{e}\leq(2m+1)\|T\|_{e},
\end{equation}
for all $T\in\mathcal{W}_{m}$ and $m\in\mathbb{N}$, and that $\Psi(U^{*})$ has Fredholm index $1$.

Define a map $\Phi:\pi_{2}(\mathcal{B})\to B(X_{p})/K(X_{p})$ as follows. Let $\pi:B(X_{p})\to B(X_{p})/K(X_{p})$ be the quotient map. For a given $T\in\mathcal{W}_{1}$, define $\Phi(\pi_{2}(T))=\pi(\Psi(T))$. By (\ref{Psiboundess}), this map is a well defined unital homomorphism and
\[\frac{1}{3}\|\pi_{2}(T)\|\leq\|\Phi(\pi_{2}(T))\|\leq3\|\pi_{2}(T)\|,\]
for all $T\in\mathcal{W}_{1}$. Since every operator in $\mathcal{B}$ is a compact perturbation of an operator in $\mathcal{W}_{1}$, it follows that
\[\frac{1}{3}\|a\|\leq\|\Phi(a)\|\leq3\|a\|,\]
for all $a\in\pi_{2}(\mathcal{B})$. Moreover, $\Phi(\pi_{2}(U^{*}))=\pi(\Psi(U^{*}))$ has Fredholm index $1$.

Next we need to show that $\Phi$ preserves the Fredholm index. Let $a\in\mathcal{C}$. Observe that all the elements along the path $\Phi\circ f_{a}:[0,1]\to B(X_{p})/K(X_{p})$ are invertible in $B(X_{p})/K(X_{p})$ and thus the Fredholm index along this path is constant \cite{murphy}. In particular, $\Phi(f_{a}(0))=\Phi(a)$ and $\Phi(f_{a}(1))=\Phi(\pi_{2}(U)^{-\mathrm{ind}\,a})=\Phi(\pi_{2}(U^{*}))^{\mathrm{ind}\,a}$ have the same Fredholm index. Since we have obtained from above that $\Phi(\pi_{2}(U^{*}))$ has Fredholm index $1$, it follows that $\Phi(a)$ has Fredholm index $\mathrm{ind}\,a$, i.e., $\Phi(a)$ and $a$ have the same Fredholm index for all $a\in\mathcal{C}$. Since $\mathcal{C}$ is dense in the set of all elements of $\mathcal{A}$ that are invertible in $B(\ell^{2})/K(\ell^{2})$, it follows that $\Phi(a)$ and $a$ have the same Fredholm index for all $a\in\mathcal{A}$ that is invertible in $B(\ell^{2})/K(\ell^{2})$.

Therefore, we have constructed a unital homomorphism $\Phi:\mathcal{A}\to B(X_{p})/K(X_{p})$ preserving the Fredholm index such that $\frac{1}{3}\|a\|\leq\|\Phi(a)\|\leq3\|a\|$ for all $a\in\mathcal{A}$.

By a result of Pe{\l}czy\'nski \cite[Proposition 7]{pelczynski}, the Banach space $X_{p}$ is $C_{p}$-isomorphic to $\ell^{p}$ for some constant $C_{p}\geq 1$ that depends only on $p$. Indeed, by Rademacher functions, every finite dimensional Hilbert space can be embedded complementably into $\ell^{p}$ where both the embedding constant and the complementation constants are bounded by a fixed constant that depends only on $p$ but not the dimension. Since $X_{p}$ is a $\ell^{p}$ direct sum of finite dimensional Hilbert spaces, $X_{p}$ can be embedded complementably into $\ell^{p}$. Conversely, it is trivial that $\ell^{p}$ can be embedded complementably into $X_{p}$. With the additional fact that $\ell^{p}$ is isomorphic to a $\ell^{p}$ direct sum of infinitely many copies of itself, the argument by Pe{\l}czy\'nski gives that $X_{p}$ is $C_{p}$-isomorphic to $\ell^{p}$. This together with the conclusion from the previous paragraph proves the result.
\end{proof}

\noindent{\bf Acknowledgements:} The author is grateful to William B. Johnson for useful discussions. This paper is extracted from the author's unpublished manuscript ``Similarity of operators on $l^{p}$" arxiv 1706.08582 (2019). The author is supported by a startup fund at Michigan State University.

\end{document}